\newtheorem{Theorem}[subsection]{Theorem}
\newtheorem{Lemma}[subsection]{Lemma}
\theoremstyle{definition}
\newtheorem{Remark}[subsection]{Remark}
\newtheorem{proposition-definition}{Proposition-Definition}
\title{Kernel Sheaf on Integral Nodal Curves}
\author[Basu, Dan, Rath]{Suratno Basu $^{*} $, Krishanu Dan $^{**} $, Aanjaneya Rath$^{***}$ }
\subjclass[2020]{14C20, 14D20, 14H40}
\keywords{Nodal Curve, Kernel Sheaf, Stable Sheaf \\
$^{*}$
Institute for advancing intelligence (IAI), TCG, CREST, Academy of Scientific and innovative Research, (ACSIR), Kolkata, India.
Email: suratno.basu@tcgcrest.org\\
$^{**}$  
School of Mathematical Sciences, National Institute of Science Education and Research,
Bhubaneswar, An OCC of Homi Bhabha National Institute,
Khurda 752050, Odisha, India.
Email: krishanu@niser.ac.in\\
$^{***}$  
School of Mathematical Sciences, National Institute of Science Education and Research,
Bhubaneswar, An OCC of Homi Bhabha National Institute,
Khurda 752050, Odisha, India.
Email: aanjaneya.rath@niser.ac.in }
\begin{document}

\begin{abstract}
In this article we study the stability of Kernel sheaf obtained from a generating subspace of rank one torsion-free sheaf on an integral nodal curve.
\end{abstract}

\maketitle

\section{Introduction}

Let $(Y, H)$ be a polarized smooth projective variety over $\mathbb{C}$ and let $E$ be a globally generated $\mu_H$-semistable vector bundle on $Y$. Let $V \subseteq H^0(Y, E)$ be a generating subspace. Then we have an exact sequence
$$
0 \longrightarrow M_{V, E} \longrightarrow V \otimes \mathcal{O}_Y \longrightarrow E \longrightarrow 0.
$$
Then vector bundle $M_{V, E}$ is called the {\it syzygy bundle} or {\it kernel bundle}. When $Y$ is a curve, $E = K_Y$ and $V = H^0(Y, K_Y)$, the stability of $M_{V, E}$ was studied by Paranjape-Ramanan (\cite{P-R}). In \cite{Butler}, Butler studied the stability of $M_{V, E}$ when $X$ is a curve and $V = H^0(Y, E)$. Ein-Lazarsfeld-Mustopa (\cite{E-L-M}) proved for stability of $M_{V, E}$ when $X$ is a surface and $E = dH$, for some $d \gg 0$. Recently, Rekuski (\cite{R}) prove the stability when $E$ is a line bundle and $V = H^0(Y, E)$.

Now assume $Y$ is a curve. Using the Brill-Noether loci and moduli of coherent systems, Butler conjecture for ``general" choice of $(L, V)$ where $L$ is a globally generated line bundle on $Y$ and $V \subseteq H^0(Y, L)$ is a generating set, $M_{V, L}$ is stable. In fact this was stated for more general vector bundles, see \cite[Conjecture 2]{Butler-2} for a precise statement. In \cite{Bhosle-3}, the authors gave a proof of Butler conjecture for a line bundles. On the other hand, Mistretta in \cite{Mistretta} studied stability of $M_{V, L}$ for generating subspaces $V \subseteq H^0(Y, L)$ of certain codimension.

Let $X$ be an integral projective curve of genus $g \geq 2$ over $\mathbb{C}$ with only ordinary nodes as singularities. Let $E$ be a globally generated torsion-free sheaf on $X$ and $V \subseteq H^0(X, E)$ be a generating subspace. Then, there is a torsion-free sheaf $M_{V, E}$ on $X$ such that we have an exact sequence
$$
0 \longrightarrow M_{V, E}^* \longrightarrow V \otimes \mathcal{O}_X \longrightarrow E \longrightarrow 0.
$$
The torsion free sheaf $M_{V, E}^*$ is called the {\it kernel sheaf} on $X$ associated to the generating subspace $V \subseteq H^0(X, E)$. In \cite{Bhosle-2}, the authors studied the stability of $M_{V, E}^*$ when $V = H^0(X, E)$ and use it in the study of Brill-Noether loci. The notion of kernel sheaf can also be generalized to the case of reducible nodal curve. In \cite{B-F}, the authors studied stability of kernel sheaves over reducible nodal curves.

Now let $X$ be an integral projective curve of genus $g \geq 2$ with only ordinary nodes as singularities. In this article, we have studied the stability of $M_{V, L}^*$ where $L$ is a rank one torsion-free and $V \subseteq H^0(X, L)$ is a generating subspace of certain codimension. When $X$ has one node, we got the following

\begin{Theorem}\label{thm 1-1}
Let $X$ be an integral projective curve with an ordinary double point, $g \geq 2$ be the arithmetic genus of $X$ and let $L$ be a globally generated torsion-free sheaf of rank one and degree $d > 2g + 2c, 1 \leq c \leq g$. Then for a generic subspace $V \subseteq H^0(X, L)$ of codimension $c$ that generates $E$, $M_{V, L}^*$ is stable.
\end{Theorem}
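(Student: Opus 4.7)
The plan is to argue by contradiction. Assume that for a generic codimension-$c$ subspace $V\subset H^{0}(X,L)$ generating $L$, the kernel sheaf $M_{V,L}^{*}$ is not stable. Then some torsion-free quotient $q\colon M_{V,L}^{*}\twoheadrightarrow G$ exists with rank $s\in\{1,\dots,\dim V-2\}$ and $\mu(G)\le \mu(M_{V,L}^{*})=-d/(d-g-c)$. Setting $K:=\ker q\subset V\otimes\mathcal{O}_{X}$ and $E:=(V\otimes\mathcal{O}_{X})/K$, one obtains a short exact sequence
\[
0\longrightarrow G\longrightarrow E\longrightarrow L\longrightarrow 0
\]
with $E$ torsion-free of rank $s+1$. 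After replacing $K$ by its saturation in $V\otimes\mathcal{O}_{X}$ (which only strengthens the destabilization), we may assume $H^{0}(K)=0$, so that $V\hookrightarrow H^{0}(E)$.

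The first step is a numerical bound. The pair $(E,V)$ is a coherent system on $X$ of type $(s+1,\deg E,\dim V)$, where $\deg E=\deg G+d\le d-sd/(d-g-c)$ and $\dim V=d-g+1-c$. I would bound $h^{0}(E)$ from above by Riemann--Roch together with a Clifford-type inequality for globally generated torsion-free sheaves on an integral nodal curve (as developed in \cite{Bhosle-2,B-F}); combined with $h^{0}(E)\ge \dim V$, the hypothesis $d>2g+2c$ should be exactly what is required to squeeze the admissible pair $(s,\deg G)$ into a very narrow range, and in most cases to rule out $s\ge 1$ outright.

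The second step is a genericity argument in the Grassmannian $\mathrm{Gr}(h^{0}(L)-c,H^{0}(L))$. For each admissible type $(s,\deg G)$ that survives the numerical step, I would estimate the dimension of the locus of $V$ giving rise to such a destabilizing $K$ by fibring over the Brill--Noether stratum of admissible quotients $E$ and counting the fibre dimension. The goal is to show that this locus has strictly positive codimension under the hypothesis $d>2g+2c$, so that a generic $V$ avoids it, contradicting the initial assumption.

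The main obstacle I expect is the treatment at the node. Torsion-free sheaves on $X$ are locally of the form $\mathcal{O}_{X,p}^{a}\oplus \mathfrak{m}_{p}^{b}$ at the node $p$, so both $G$ and $E$ carry discrete ``local type'' data at $p$ that shift their Euler characteristics and their parameter spaces. I would handle this by stratifying according to the local type of $E$ at $p$ (splitting cases on whether $L$ itself is locally free at $p$) and, where convenient, pulling back to the normalization $\pi\colon \widetilde{X}\to X$ (in the style of \cite{Bhosle-2,B-F}) to reduce the Brill--Noether and Clifford bounds to their classical smooth-curve versions on $\widetilde{X}$. The explicit numerical bound $d>2g+2c$ should absorb precisely the arithmetic correction introduced by the node.
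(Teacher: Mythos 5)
Your overall strategy---stratify the possible destabilizations, bound the dimension of the locus of bad $V$'s inside $\mathrm{Gr}(c,H^0(X,L))$, and handle the node by local types and passage to the normalization---is the same genericity-plus-parameter-count philosophy the paper uses (following Mistretta). The genuine gap is in your Step 1. You work with an arbitrary-rank torsion-free quotient $E=(V\otimes\mathcal{O}_X)/K$ of rank $s+1$ and hope that Riemann--Roch plus a Clifford-type bound eliminates most $s\ge 1$. This does work when $h^1(X,E)=0$: Riemann--Roch and $h^0(E)\ge\dim V=d+1-g-c$ give $\deg E\ge d-c+s(g-1)$, while $\mu(G)\le\mu(M^*_{V,L})=-d/(d-g-c)<-1$ gives $\deg E<d-s$, forcing $sg<c\le g$, impossible. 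But when $E$ is special, a Clifford bound $h^0(E)\le \deg E/2+\mathrm{rk}\,E$ only contradicts $h^0(E)\ge d+1-g-c$ for roughly $s<d-2g-2c$, so quotients of large rank survive the numerics; worse, such $E$ are typically \emph{not} semistable (e.g.\ $E=(V\otimes\mathcal{O}_X)/M^*_{W,F}$ with $F\subset L$ of rank one and $\dim W$ small sits in $0\to F\to E\to (V/W)\otimes\mathcal{O}_X\to 0$, so it is unstable and $h^1(E)\ne 0$), so no Clifford-type inequality applies to it directly. Hence Step 1 cannot ``rule out $s\ge 1$ outright'', and your Step 2 would then have to parametrize higher-rank special, non-semistable torsion-free quotients $E$ together with $V\hookrightarrow H^0(E)$ on a nodal curve; the Brill--Noether dimension bounds this requires are neither proved in your sketch nor available in the cited literature, and they are substantially harder than what the theorem actually needs.

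The missing idea is the paper's structural reduction: take a saturated stable subsheaf $N\subset M^*_{V,L}$ of maximal slope and show (Lemma~\ref{lem 2-2}) that $N=M^*_{W,F}$ for a torsion-free sheaf $F$ with no trivial summand, generated by $W\subseteq H^0(X,F)$, equipped with a nonzero map to $L$; since $d>2g+2c$ gives $\mu(N)\ge\mu(M^*_{V,L})>-2$, \cite[Proposition 3.11]{Bhosle-2} yields $h^1(X,F)=0$, and then the numerics of Lemma~\ref{lem 2-3} force $\mathrm{rk}\,F=1$ and $F\hookrightarrow L$. Every potential destabilization is thereby encoded by the small data $(F,\,F\hookrightarrow L,\,W\subset H^0(X,F),\,V\supseteq W)$, so the dimension count runs over the compactified Jacobian $\bar J^{\,d-s}(X)$ and only needs a bound on $h^0(X,\mathcal{H}om_{\mathcal{O}_X}(F,L))$, obtained from rank-one Clifford inequalities (on $X$ when $\mathcal{H}om(F,L)$ is a line bundle, via \cite[Lemma 2.5]{Bhosle-1}, Lemma~\ref{lem 3-1} and pushforward from the normalization otherwise), together with the inequality \eqref{E2} relating $s$, $b$, $c$. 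Your remarks about local types at the node are consistent with this, but without the rank-one kernel-sheaf reduction the proposal as written does not close.
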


When $d = 2g + 2c$, we have shown (Theorem \ref{thm 3-2}) that $M_{V, L}^*$ is semistable and it is stable when $X$ is non-hyperelliptic. We have also studied the case of multi-nodes. In this case we  obtained the following

\begin{Theorem}\label{thm 1-2}
Let $X$ be an integral projective curve of arithmetic genus $g \geq 3$ with $n$-nodes. Let $L$ be a globally generated torsion-free sheaf of rank one and degree $d > 10(g+c)/3, 1 \leq c \leq g$. Then for a generic subspace $V \subseteq H^0(X, L)$ of codimension $c$ that generates $L$, $M_{V, L}^*$ is stable. Moreover, if the number of node $n \leq g - 2$, then $M_{V, L}^*$ is stable for $d > 2g + 2c$.
\end{Theorem}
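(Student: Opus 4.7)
The plan is to adapt the codimension-$c$ subspace argument of Theorem \ref{thm 1-1} from the one-node setting to $n$ nodes, in the spirit of Mistretta's approach on smooth curves (\cite{Mistretta}) and the treatment of kernel sheaves via their defining short exact sequence. Suppose, for contradiction, that $M_{V,L}^*$ is unstable for a generic $V \subseteq H^0(X,L)$ of codimension $c$. Then there exists a saturated destabilizing subsheaf $F \subseteq M_{V,L}^*$ of some rank $r$ with $1 \leq r \leq \dim V - 2$, satisfying $\mu(F) \geq \mu(M_{V,L}^*) = -d/(h^0(X,L)-c-1)$. Via the inclusion $M_{V,L}^* \hookrightarrow V \otimes \mathcal{O}_X$, the sheaf $F$ sits inside $V \otimes \mathcal{O}_X$, so $W := H^0(X,F) \subseteq V$ is a subspace whose evaluation $W \otimes \mathcal{O}_X \to F$ is generically surjective; after saturating we may assume $F$ is generated by $W$ away from the nodes.

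The next step is to bound $\dim W$ in terms of $r$ and $e := \deg F$ using a Clifford-type inequality for rank-one torsion-free quotients of $V\otimes\mathcal{O}_X$ on an integral nodal curve, as in \cite{Bhosle-2}. Since $W$ must lie in the kernel of the evaluation map determined by the image of $F$ in $L$, one plays this cohomological bound against the slope inequality $e(h^0(X,L)-c-1) \geq -rd$ to produce a contradiction. A careful accounting of the behavior at nodes — at each of the $n$ nodes, $F$ and its image may independently be locally free or not — enters here through corrections between degrees on $X$ and on the normalization $\pi:\tilde X\to X$. The constant $10/3$ in the hypothesis $d>10(g+c)/3$ should emerge as the worst-case optimum of this slope-vs-Clifford balance over the $2^n$ possible local types of $F$, and is deliberately slacker than the sharp $d>2g+2c$ bound available when the node contributions are under tighter control.

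The genericity of $V$ enters through a dimension count on the Grassmannian $\mathrm{Gr}(h^0(X,L)-c,H^0(X,L))$: for each admissible numerical datum $(r,e,\dim W)$ — and these are finite, since slopes are bounded — one forms the incidence variety of pairs $(V,F)$ and shows its projection to the Grassmannian has image of positive codimension, so the union over all numerical types still misses a nonempty open set. For the improvement $d>2g+2c$ under $n\leq g-2$, the additional room between the arithmetic genus and the node count allows a strictly stronger Clifford estimate, mirroring the sharp bound obtained in Theorem \ref{thm 1-1}. The main obstacle will be the node-by-node bookkeeping: each node contributes a potential discrepancy between $\deg F$ and $\deg \pi^*F$, and managing these contributions uniformly — in particular, ruling out the pathological case in which $F$ fails to be locally free at many nodes simultaneously — is what drives the constant $10/3$ and requires the most delicate combinatorial control.
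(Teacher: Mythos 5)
Your overall strategy (an incidence/parameter space of destabilizing data whose dimension is shown to be smaller than $\dim \mathrm{Gr}(c,H^0(X,L))$) is the same as the paper's, but the structural step that makes this strategy work is missing and is replaced by a claim that fails. You take a saturated destabilizing subsheaf $F\subseteq M_{V,L}^*$ and assert that $W:=H^0(X,F)\subseteq V$ generates $F$ generically. This cannot work: any nonzero subsheaf of $M_{V,L}^*$ of slope $\geq \mu(M_{V,L}^*)$ has negative degree (it sits inside $V\otimes\mathcal{O}_X$ and contains no trivial subsheaf, since $V\hookrightarrow H^0(X,L)$), so it is not globally generated, and in general $h^0$ of such a subsheaf gives no control at all over the family of possible destabilizers; your incidence variety indexed by $(r,e,\dim W)$ is therefore not set up over a bounded, computable parameter space. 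The paper's route is dual (Lemma \ref{lem 2-2}, following Mistretta and Bhosle--Singh): one dualizes the inclusion $N\hookrightarrow M_{V,L}^*$ to show that $N^*$ is generated by the image $W^*$ of $V^*$, and dualizing back one finds that the maximal destabilizing subsheaf is itself a kernel sheaf, $N=M_{W,F}^*$, where $F$ is a quotient of $W\otimes\mathcal{O}_X$ equipped with a nonzero map $\sigma:F\to L$ and $W\hookrightarrow V$. Then, since $\mu(N)\geq\mu(M_{V,L}^*)>-2$, \cite[Proposition 3.11]{Bhosle-2} gives $h^1(X,F)=0$, and Lemma \ref{lem 2-3} forces $\mathrm{rk}\,F=1$ and $\sigma$ injective. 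Only after this reduction does the destabilization become a finite list of data $(F\hookrightarrow L,\ W\in\mathrm{Gr}(b,H^0(X,F)),\ V\supseteq W)$ parametrized by $\mathcal{D}_{b,s}$ over the compactified Jacobian, with dimension one can actually estimate.

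A second gap concerns where the constant $10/3$ comes from. You attribute it to a worst-case Clifford-versus-slope balance over the $2^n$ local types of the destabilizing subsheaf, but in the paper the node contribution enters through $E=\mathcal{H}om_{\mathcal{O}_X}(F,L)$: one has $s\leq\deg E\leq s+r\leq s+n$, and $h^0(X,E)$ is bounded by pushing forward line bundles from partial normalizations (Seshadri) together with Clifford on the normalization, while the effective locus in $\bar J^{d-s}(X)$ has dimension at most $\min(s+r,g)$. This produces the extra additive $n$ in the bound $\frac{3}{2}s+n+b(d-s-g+1-b)+c(s+b-c)$, and the required inequality reduces to $\frac{b+g+n-1}{b+g-\frac{3}{2}}<\frac{d}{g+c}$, which yields $10/3$ when $n\leq g$ and yields $2$ (hence $d>2g+2c$) when $n\leq g-2$. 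Without the reduction to a rank-one $F\subseteq L$ with $h^1(X,F)=0$, neither this $h^0$ bound nor the Grassmannian factor $\mathrm{Gr}(b,H^0(X,F))$ in the fiber dimension is available, so the proposed count cannot be completed as stated.
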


Our approach is a suitable adaptation of the arguments as given in \cite{Mistretta}: we construct a parameter space that contains data of all possible destabilization of $M_{V, L}^*$ and show that dimension of this space is less than the dimension of $\text{Gr}(c, H^0(C, L))$. Then for a generic choice of $V \in \text{Gr}(c, H^0(C, L))$ no such destabilization exists and hence $M_{V, L}^*$ is stable.

\section{Kernel Sheaf}

Let $X$ be an integral projective curve with ordinary double points as singularities and let $g$ be the arithmetic genus of $X$. 

\begin{Lemma}\label{lem 2-1}
Let $0 \longrightarrow E_1 \longrightarrow E_2 \longrightarrow E_3 \longrightarrow 0$ be a short exact sequence of torsion-free sheaves on $X$. Then
\begin{itemize}
\item[$(i)$] The sequence $0 \longrightarrow E_3^* \longrightarrow E_2^* \longrightarrow E_1^* \longrightarrow 0$ is exact;

\item[$(ii)$] if any two of them is locally free, then so is the third.
\end{itemize}
\end{Lemma}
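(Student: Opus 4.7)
The plan is to verify both parts stalk by stalk. At a smooth point every torsion-free coherent sheaf is locally free and the two statements are trivial, so all the content of the lemma concentrates at the nodes. Fix a node $p \in X$ and set $R = \widehat{\mathcal{O}}_{X,p} \cong k[[x,y]]/(xy)$; this $R$ is a one-dimensional Gorenstein complete intersection, with maximal ideal $\mathfrak{m}=(x,y)$.

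For part $(i)$, applying $\mathcal{H}om(-,\mathcal{O}_X)$ to the given short exact sequence is exact on the left, and the only obstruction to right exactness is $\mathcal{E}xt^1(E_3,\mathcal{O}_X)$. This sheaf is supported on the singular locus, so it suffices to prove $\mathrm{Ext}^1_R(M,R)=0$ for every finitely generated torsion-free $R$-module $M$. I would invoke the classical structure theorem that every such $M$ is isomorphic to a direct sum of copies of $R$ and $\mathfrak{m}$, reducing the vanishing to showing $\mathrm{Ext}^1_R(\mathfrak{m},R)=0$. The short exact sequence $0\to \mathfrak{m}\to R\to k\to 0$ yields an embedding $\mathrm{Ext}^1_R(\mathfrak{m},R)\hookrightarrow \mathrm{Ext}^2_R(k,R)$, and the Gorenstein property of $R$ (dimension one, injective dimension one) forces $\mathrm{Ext}^i_R(k,R)=0$ for $i\neq 1$. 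Alternatively, one may bypass the classification by invoking the general fact that on a Gorenstein curve every torsion-free sheaf is reflexive and has vanishing higher $\mathcal{E}xt$ into the structure sheaf.

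For part $(ii)$, the two cases in which $E_3$ is locally free are easy: the sequence splits locally, because a free module is projective, and the remaining term is exhibited as a direct summand of a free module, hence itself free. The nontrivial case is that $E_1$ and $E_2$ are locally free while $E_3$ is only known to be torsion-free. There $0\to E_1\to E_2\to E_3\to 0$ is a length-one free resolution of $E_3$ at $p$, so $\mathrm{pd}_R(E_3)\leq 1$. On the other hand any nonzero finitely generated torsion-free $R$-module $N$ has depth exactly one: the element $x+y\in R$ is a non-zerodivisor, and torsion-freeness makes it $N$-regular, while $\mathrm{depth}(N)\leq\dim(R)=1$. The Auslander--Buchsbaum formula now gives $\mathrm{pd}_R(E_3)=\mathrm{depth}(R)-\mathrm{depth}(E_3)=0$, so $E_3$ is free at $p$.

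The real obstacle is part $(i)$: without the Gorenstein property of a nodal singularity the dual sequence genuinely fails to remain right exact, and the argument forces one into the local structure theory at a node. I would therefore open the write-up by explicitly recording the classification of torsion-free modules over the nodal local ring, so the Ext computation becomes a one-line check.
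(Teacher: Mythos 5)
Your proof is correct, but it is not the paper's argument: the paper disposes of this lemma purely by citation to \cite[Lemma 2.2]{Bhosle-1} and gives no proof, whereas you supply a self-contained local-algebra argument. Your route --- reduce to the node, note that over the one-dimensional Gorenstein ring $R=k[[x,y]]/(xy)$ a finitely generated torsion-free module is maximal Cohen--Macaulay, so $\mathrm{Ext}^1_R(M,R)=0$, which kills the obstruction $\mathcal{E}xt^1(E_3,\mathcal{O}_X)$ to right exactness of the dual sequence, and then Auslander--Buchsbaum for part $(ii)$ --- is essentially the argument underlying the cited result, and it has the advantage of working verbatim for any Gorenstein curve singularity, not just nodes, as your own alternative remark indicates. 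Two points of precision, neither a gap in substance. First, the structure theorem you propose to open with is stated too strongly: over the complete nodal ring the indecomposable torsion-free modules are $R$, $R/(y)\cong k[[x]]$ and $R/(x)\cong k[[y]]$, with $\mathfrak{m}\cong k[[x]]\oplus k[[y]]$, so ``direct sum of copies of $R$ and $\mathfrak{m}$'' is only correct for modules of equal rank on the two branches --- which stalks of torsion-free sheaves on the integral curve do have; alternatively, $\mathrm{Ext}^1_R(-,R)$ also vanishes on $R/(x)$ and $R/(y)$ by a direct computation with the periodic resolution, or one simply runs your Gorenstein/MCM argument, which needs no classification at all. Second, you pass silently between $\mathcal{O}_{X,p}$ and its completion; this is harmless (completion is faithfully flat, Ext and depth commute with it, and freeness descends), but it deserves a sentence, since torsion-freeness at the node is a condition over the domain $\mathcal{O}_{X,p}$ while $\widehat{\mathcal{O}}_{X,p}$ is not a domain, so one should justify that the completed stalk still has depth one.
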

\begin{proof}
See \cite[Lemma 2.2]{Bhosle-1}
\end{proof}

Let $E$ be a globally generated torsion-free  sheaf on $X$ and $V \subseteq H^0(X, E)$ be a generating subspace. Then there is a torsion-free sheaf $M_{V, E}$ on $X$ such that we have a commutative diagram:
$$
0 \longrightarrow M_{V, E}^* \longrightarrow V \otimes \mathcal{O}_X \longrightarrow E \longrightarrow 0.
$$
By Lemma \ref{lem 2-1}, if $E$ is locally free, then so is $M_{V, E}^*$.

\begin{Lemma}\label{lem 2-2}
Let $E$ be a globally generated semistable torsion-free sheaf on $X$. Let $V \subseteq H^0(X, E)$ be a subspace that generates $E$ and let $N$ be a stable saturated subsheaf of $M_{V, E}^*$ of maximal slope. Then there is a torsion-free sheaf $F$ with no trivial summand, a subspace $W \subseteq H^0(X, F)$ generating $F$ and a non-zero morphism $\sigma: F \longrightarrow E$ such that $N = M_{W, F}^*$. Moreover, $\mu(F) \leq \mu\big( \sigma(F) \big)$ and in case of equality, $F$ is a subsheaf of $E$.
\end{Lemma}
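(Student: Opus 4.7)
My strategy is to realize $N$ as the kernel sheaf of an auxiliary pair $(W, F)$, by taking $F$ to be the cokernel of $N \hookrightarrow V \otimes \mathcal{O}_X$ with its trivial summands stripped off, and then to extract the slope inequality from the maximality of $\mu(N)$ combined with the global generation of $E$.

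First, $N$ is saturated in $V \otimes \mathcal{O}_X$ (saturation composes: $M_{V, E}^*$ is saturated as the kernel of a surjection onto a torsion-free sheaf, and $N$ is saturated in $M_{V, E}^*$ by hypothesis), so $F_0 := (V \otimes \mathcal{O}_X)/N$ is torsion-free. Since $N \subseteq M_{V, E}^*$, the surjection $V \otimes \mathcal{O}_X \twoheadrightarrow E$ factors through a surjection $F_0 \twoheadrightarrow E$. By Krull--Schmidt for coherent sheaves on $X$, decompose $F_0 = F \oplus \mathcal{O}_X^k$ with $F$ containing no trivial direct summand. The composition $V \otimes \mathcal{O}_X \twoheadrightarrow F_0 \twoheadrightarrow \mathcal{O}_X^k$ corresponds to a surjective linear map $V \twoheadrightarrow \mathbb{C}^k$; set $W := \ker(V \to \mathbb{C}^k)$ and fix a complement $V = W \oplus V''$ with $V'' \to \mathbb{C}^k$ an isomorphism. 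In this splitting, the map $V \otimes \mathcal{O}_X \to F \oplus \mathcal{O}_X^k$ takes the block form $\begin{pmatrix} A & B \\ 0 & I_k \end{pmatrix}$, where $A : W \otimes \mathcal{O}_X \to F$. A direct diagram chase shows $A$ is surjective with kernel exactly $N$, so $0 \to N \to W \otimes \mathcal{O}_X \to F \to 0$ is exact and $N = M_{W, F}^*$. Because $V \subseteq H^0(X, E)$ implies $H^0(M_{V, E}^*) = 0$, we get $H^0(N) = 0$, hence $W \hookrightarrow H^0(X, F)$ is a genuine subspace generating $F$. The morphism $\sigma : F \hookrightarrow F_0 \twoheadrightarrow E$ is then non-zero, provided $E$ is not wholly a quotient of the trivial summand $\mathcal{O}_X^k$, which can be ensured under the mild assumption that $E$ has no trivial quotient.

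For the slope comparison, put $E' := \sigma(F) \subseteq E$ and $K := \ker(\sigma)$, so $0 \to K \to F \to E' \to 0$. Since $E'$ is a quotient of $W \otimes \mathcal{O}_X$, it is globally generated, hence $\deg E' \geq 0$ and $\mu(E') \geq 0$. The sheaf $M' := \ker(W \otimes \mathcal{O}_X \twoheadrightarrow E')$ equals $(W \otimes \mathcal{O}_X) \cap M_{V, E}^* \subseteq M_{V, E}^*$ and fits into $0 \to N \to M' \to K \to 0$. A rank--degree count gives $\mu(M') = -\deg(E')/(\dim W - \mathrm{rk}\,E') \leq \mu(E')$, with equality if and only if $\deg E' = 0$. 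The maximality of $\mu(N)$ yields $\mu(N) \geq \mu(M')$, and the exact sequence for $M'$ then forces $\mu(K) \leq \mu(M') \leq \mu(E')$. Since $\mu(F)$ is the weighted mediant of $\mu(K)$ and $\mu(E')$, we conclude $\mu(F) \leq \mu(\sigma(F))$.

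The main obstacle is the equality case. If $\mu(F) = \mu(\sigma(F))$, either $K = 0$, giving $\sigma$ injective and $F \hookrightarrow E$ as required, or $\mu(K) = \mu(E')$, which propagates backwards to force $\deg E' = 0$ and $\mu(N) = \mu(M') = 0$. Ruling out this second scenario is the delicate step: one must use the stability of $N$ together with the maximality of its slope in $M_{V, E}^*$ to argue that a proper enlargement $N \subsetneq \overline{M'} \subseteq M_{V, E}^*$ of the same slope would contradict the choice of $N$ as a stable saturated subsheaf of maximal slope (via a careful analysis of the first step of the Harder--Narasimhan filtration of $M_{V, E}^*$ and how $N$ sits as one of its stable factors).
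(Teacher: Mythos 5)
Your construction of $(F,W,\sigma)$ is essentially sound and is a workable ``primal'' variant of the paper's argument: the paper dualizes $N\hookrightarrow M_{V,E}^*$ to exhibit $N^*$ as globally generated by the image $W^*$ of $V^*$ and then dualizes back, whereas you form $F_0=(V\otimes\mathcal{O}_X)/N$ directly and strip trivial summands by Krull--Schmidt; your $M'$ is exactly the paper's $M_{W,I}^*$ with $I=\sigma(F)$, and your chain $\mu(K)\le\mu(M')\le 0\le\mu(E')$ does give $\mu(F)\le\mu(\sigma(F))$. One blemish: you should not need (and are not entitled to) the extra hypothesis that $E$ has no trivial quotient in order to get $\sigma\ne 0$; it is automatic, since $k\le \operatorname{rk} F_0<\dim V$ forces $W\ne 0$, and $\sigma\circ A$ is the evaluation map of the nonzero subspace $W\subseteq V\subseteq H^0(X,E)$, which cannot be the zero map of sheaves.

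The genuine gap is the equality clause, which is part of the statement (and is what Lemma \ref{lem 2-3} and the Remark in Section 2 lean on): you explicitly leave open the scenario $K\ne 0$, $\mu(K)=\mu(E')$, and your pointer to ``a careful analysis of the Harder--Narasimhan filtration'' is not an argument. In fact that scenario forces $\deg E'=\deg K=0$, hence $\deg F=0$ and $\mu(N)=0$, and it can be excluded with tools you already have on the table: a nonzero projection of the stable sheaf $N$ to one of the trivial factors of $V\otimes\mathcal{O}_X$ must be injective (otherwise its image would be a proper quotient of $N$, of slope $>\mu(N)=0$, sitting inside $\mathcal{O}_X$, whose subsheaves have slope $\le 0$), so $N$ would be a rank-one, degree-zero subsheaf of $\mathcal{O}_X$, i.e.\ $N\cong\mathcal{O}_X$, contradicting $H^0(X,N)=0$, which you established. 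The paper sidesteps this fork entirely: from $\mu(N)\ge\mu(M_{W,I}^*)$ it deduces $\deg F\le\deg I$, and then $\mu(F)=\mu(I)$ forces $\operatorname{rk}F=\operatorname{rk}I$ and $\deg F=\deg I$, so the surjection $\sigma\colon F\to I$ has kernel of rank and degree zero and is an isomorphism, giving $F\cong I\subseteq E$. As written, your proposal proves the existence of $(F,W,\sigma)$ with $N=M_{W,F}^*$ and the inequality, but not that equality implies $F$ is a subsheaf of $E$.
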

\begin{proof}
We will argue as in \cite[Lemma 2.2]{Mistretta} (see also \cite[Lemma 3.9]{Bhosle-2}). By Lemma \ref{lem 2-1}, $V^*\otimes \mathcal{O}_X^* \longrightarrow M_{V, E}^{**} \longrightarrow N^*$ is surjective and $N^*$ is generated by sections. Let $W^* = \text{Im}\big( V^* \longrightarrow H^0(X, N^*) \big)$. Then we have a short exact sequence $0 \longrightarrow F_1 \longrightarrow W^* \otimes \mathcal{O}_X^* \longrightarrow N^* \longrightarrow 0$ of torsion-free sheaves on $X$. Taking dual again and using Lemma \ref{lem 2-1}, we get a short exact sequence $0 \longrightarrow N^{**} \longrightarrow W \otimes \mathcal{O}_X \longrightarrow F_1^* \longrightarrow 0$. Since $N$ is torsion-free, we may consider it as a subsheaf of $N^{**}$. Since $\big( M_{V, E}^{**} \big)^* = M_{V, E}^*$ and $N^{**}$ is a subsheaf of $\big( M_{V, E}^{**} \big)^*$, we get $N = N^{**}$ and we have a commutative diagram
 \begin{center}
 \begin{tikzpicture}
  \matrix(m)[matrix of math nodes,row sep=2.0em, column sep=3.5em,text height=1.5ex, 
 text depth=0.25ex]
 {  0 &  M_{V, E}^* & V \otimes \mathcal{O}_X & E & 0\\
 0 & N & W \otimes \mathcal{O}_X & F & 0 \\};
 \path[->](m-1-1) edge (m-1-2);
 \path[->](m-1-2) edge  (m-1-3);
 \path[->](m-1-3) edge  (m-1-4);
 \path[->](m-1-4) edge  (m-1-5);
 \path[->](m-2-1) edge (m-2-2);
 \path[->](m-2-2) edge  (m-2-3);
 \path[->](m-2-3) edge  (m-2-4);
 \path[->](m-2-4) edge  (m-2-5);
 \path[right hook ->](m-2-2) edge  (m-1-2);
 \path[right hook ->](m-2-3) edge  (m-1-3);
 \path[->](m-2-4) edge node[right]{$\sigma$}  (m-1-4);
 \end{tikzpicture}
 \end{center}
where $F = F_1^*$ is a reflexive sheaf on $X$ and $\sigma$ is a non-zero morphism. Let $I = \sigma(F) \subseteq E$. Then we have a commutative diagram of vector spaces
\begin{center}
 \begin{tikzpicture}
  \matrix(m)[matrix of math nodes,row sep=2.0em, column sep=3.5em,text height=1.5ex, 
 text depth=0.25ex]
 {  V &   & H^0(X, E)\\
 W &  H^0(X, F) & H^0(X, I) \\};
 \path[right hook ->](m-1-1) edge (m-1-3);
 \path[right hook ->](m-2-1) edge (m-2-2);
 \path[->](m-2-2) edge  (m-2-3);
 \path[right hook ->](m-2-1) edge  (m-1-1);
 \path[right hook ->](m-2-3) edge  (m-1-3);
 \end{tikzpicture}
 \end{center}
Thus the map $W \longrightarrow H^0(X, I)$ is injective and we denote its image by $W$, again. Then we have a commutative diagram with exact rows
\[\xymatrixcolsep{3.5pc}\xymatrixrowsep{2.5pc}\xymatrix@R-1pc{
  0\ar[r] & M_{V, E}^* \ar[r] & V \otimes \mathcal{O}_X \ar[r] & E\ar[r] & 0\\
  0\ar[r] & M_{W, I}^*\ar@{^{(}->}[u] \ar[r] & W \otimes \mathcal{O}_X \ar@{^{(}->}[u] \ar[r] & I\ar@{^{(}->}[u] \ar[r] & 0\\
  0\ar[r] & N\ar@{^{(}->}[u] \ar[r] & W \otimes \mathcal{O}_X\ar@{=}[u] \ar[r] & F \ar@{->>}[u]_{\sigma} \ar[r] & 0
 }\]
By the condition on $N$, we have
\begin{align*}
\mu(N) \geq \mu(M_{W, I}^*) \,\, &\Longrightarrow \,\, - \big( \deg F/ \text{rk}N \big) \geq - \big( \deg I/\text{rk} M_{W, I}^* \big)\\
&\Longrightarrow \,\, \deg F \leq \big( \text{rk}N/\text{rk} M_{W, I}^* \big) \deg I \leq \deg I.
\end{align*}

Suppose $\text{rk} F = \text{rk} I$. Then $\text{rk} N = \text{rk} M_{W, I}^*$. If $\deg F = \deg I$. then $F \cong I$. In case of $\deg F < \deg I$, by semitability of $E$ we get $\deg F < \deg I \leq \deg E$ so that $\mu(F) < \mu(I)$.

Now assume $\text{rk} I < \text{rk} F$. Then by semitability of $E$ we have
\begin{align*}
\deg F \cdot \text{rk} I < \deg I \cdot \text{rk} F \,\, \Longrightarrow \,\, \mu(F) < \mu(I) \leq \mu(E).
\end{align*}
If $\mu(F) = \mu(I)$, then by above we must have $\text{rk}(I) = \text{rank}(F)$ and $\deg I = \deg F$. Hence $F \cong I$.
\end{proof}

\begin{Lemma}\label{lem 2-3}
Let $E$ be a semistable torsion-free sheaf on $X$ with no trivial summand of rank $r$ and degree $d \geq 2gr + 2c$ where $1 \leq c\leq g$. Let $V \subseteq H^0(X, E)$ be a subspace of codimension $c$ such that $V$ generates $E$ and let $N$ be a stable subbundle of $M_{V, E}^*$ of maximal slope. Let $F$ be the torsion-free sheaf as obtained in Lemma \ref{lem 2-2}. Suppose $h^1(X, F) = 0$. Then  $\text{rk} F \leq r$, $\deg F \leq d$ and $\mu(F) \leq \mu(E)$. Moreover, if $E$ is stable and $\mu(E) = \mu(F)$, then $F \cong E$.
\end{Lemma}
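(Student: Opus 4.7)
The plan is to combine the structural data from Lemma \ref{lem 2-2} with Riemann--Roch applied to the two $h^{1}$-vanishings available. Apply Lemma \ref{lem 2-2} to obtain $(F, W, \sigma\colon F \to E, I = \sigma(F))$. The injections $W \hookrightarrow V$ and $W \hookrightarrow H^{0}(X,F)$ from that lemma (the latter forcing $H^{0}(X,N) = 0$), together with the rank identity $\dim W = \text{rk}(N) + \text{rk}(F)$ read off from $0 \to N \to W \otimes \mathcal{O}_{X} \to F \to 0$, produce the two master inequalities
\[
\text{rk}(N) + \text{rk}(F) \leq h^{0}(X,E) - c, \qquad \text{rk}(N) + \text{rk}(F) \leq h^{0}(X,F).
\]

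Next I would feed in the vanishings: semistability of $E$ with $\mu(E) = d/r \geq 2g + 2c/r > 2g-2$ gives $h^{1}(X,E) = 0$, hence $h^{0}(X,E) = d - r(g-1)$ by Riemann--Roch on the nodal curve $X$; the hypothesis $h^{1}(X,F) = 0$ gives $h^{0}(X,F) = \deg F - \text{rk}(F)(g-1)$. Substituting into the second master inequality and rearranging yields the crucial Riemann--Roch bound
\[
\deg F \geq g \cdot \text{rk}(F) + \text{rk}(N).
\]
The slope conclusion $\mu(F) \leq \mu(E)$ is then immediate from Lemma \ref{lem 2-2} via $\mu(F) \leq \mu(I) \leq \mu(E)$, the last inequality by semistability of $E$. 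For the degree conclusion, Lemma \ref{lem 2-2} gives $\deg F \leq \deg I$, and a case split on $\text{rk}(I)$ --- either $\text{rk}(I) = r$ so $I \subseteq E$ has full rank and $\deg I \leq d$, or $\text{rk}(I) < r$ so semistability gives $\deg I \leq (d/r)\,\text{rk}(I) < d$ --- yields $\deg F \leq d$.

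The rank bound $\text{rk}(F) \leq r$ is the main obstacle. I split into cases. If $\text{rk}(F) = \text{rk}(I)$, then the surjection $F \twoheadrightarrow I$ has kernel of rank zero, which vanishes by torsion-freeness of $F$, so $F \cong I \subseteq E$ and hence $\text{rk}(F) \leq r$. If $\text{rk}(I) < \text{rk}(F)$, Lemma \ref{lem 2-2} sharpens to $\mu(F) < \mu(I) \leq d/r$, i.e.\ $\deg F < (d/r)\,\text{rk}(F)$; I would then pit this against the Riemann--Roch lower bound $\deg F \geq g \cdot \text{rk}(F) + \text{rk}(N) \geq g \cdot \text{rk}(F) + 1$ and the hypothesis $d \geq 2gr + 2c$ with $1 \leq c \leq g$ to force a contradiction with $\text{rk}(F) > r$. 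This closing numerical juggling is the delicate step, and it is exactly where the codimension restriction $c \leq g$ is used essentially.

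Finally, for the equality case, suppose $E$ is stable and $\mu(F) = \mu(E)$. Then the chain $\mu(F) \leq \mu(I) \leq \mu(E)$ collapses, which by Lemma \ref{lem 2-2} puts us in the case $\text{rk}(F) = \text{rk}(I)$, so $F \cong I \subseteq E$. Stability of $E$ rules out proper subsheaves of the same slope unless they have full rank, so $\text{rk}(I) = r$; combined with $\mu(I) = \mu(E)$ this gives $\deg I = d$, making $E/I$ torsion of degree zero, hence $I = E$ and $F \cong E$.
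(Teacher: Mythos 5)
Your slope bound, degree bound, and equality case are fine (minor point: $\deg F\leq \deg I$ is extracted from the \emph{proof} of Lemma \ref{lem 2-2}, not its statement, but that is harmless). The genuine gap is the rank bound, exactly at the step you defer to ``numerical juggling'': in the case $\text{rk}\,I<\text{rk}\,F$ you propose to contradict $\text{rk}\,F\geq r+1$ using only $\deg F\geq g\,\text{rk}\,F+\text{rk}\,N$, $\text{rk}\,N+\text{rk}\,F\leq h^0(X,E)-c$, $\deg F\leq \deg I\leq d$, $\mu(F)<\mu(I)\leq d/r$, and $d\geq 2gr+2c$, $1\leq c\leq g$. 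These inequalities cannot force a contradiction, because nowhere do you use the defining property of $N$, namely that it has maximal slope in $M_{V,E}^*$, i.e. $\mu(N)=-\deg F/(\dim W-\text{rk}\,F)\geq \mu(M_{V,E}^*)=-d/(d-rg-c)$. Indeed, all of your constraints are simultaneously satisfiable with $\text{rk}\,F=r+1$: take $r=1$, $g=2$, $c=1$, $d=6$, $\text{rk}\,F=2$, $\deg F=5$, $\dim W=3$, $\text{rk}\,N=1$, $\text{rk}\,I=1$, $\deg I=5$; every inequality you list holds, so no formal contradiction is available. What excludes such data is precisely the omitted destabilization inequality: here $\mu(N)=-5<-2=\mu(M_{V,E}^*)$.

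The paper closes this case by turning your second master inequality $\dim W\leq h^0(X,F)$ (via $h^1(X,F)=0$ and Riemann--Roch) into the upper bound $\mu(N)=-\deg F/(\dim W-\text{rk}\,F)\leq -\mu(F)/(\mu(F)-g)$, then using $\mu(F)\leq d/(r+1)$ (from $\deg F\leq d$ and $\text{rk}\,F\geq r+1$), the monotonicity of $-x/(x-g)$ for $x>g$, and $c\leq g$ to get $\mu(N)\leq -d/(d-(r+1)g)\leq -d/(d-rg-c)=\mu(M_{V,E}^*)$. Maximality of $\mu(N)$ forces equality throughout, which pins down $c=g$, $W=H^0(X,F)$, $\deg F=d$, $\text{rk}\,F=r+1$; then $h^0(X,F)=d+(r+1)(1-g)=\dim V+1>\dim V\geq \dim W$ contradicts $W=H^0(X,F)\subseteq V$. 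So to repair your argument you must bring $\mu(N)\geq \mu(M_{V,E}^*)$ into the comparison (this is also where the hypotheses $d\geq 2gr+2c$ and $c\leq g$ actually enter); the rest of your outline then matches the paper's proof.
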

\begin{proof}
We have
$$
\dim W \leq h^0(X, F)  = \deg F + \text{rk}F (1 - g) = \text{rk}F ( \mu(F) + 1- g)
$$
so that
$$
\mu(N) = \dfrac{- \deg F}{\dim W - \text{rk} F} \leq  \dfrac{-\mu(F)}{\mu(F) - g}.
$$
On the other hand, $\text{rk} M_{V, E}^* = \dim V - r = h^0(X, E) - c - r = d - rg - c.$ If $\text{rk} F \geq r+ 1$, then $\mu(F) \leq d/(r+1)$ and we have
\begin{equation}\label{E1}
\mu(N) \leq \dfrac{-\mu(F)}{\mu(F) - g} \leq  \dfrac{-d}{d - (r+1) g} \leq \dfrac{-d}{d - rg - c} = \mu(M_{V, E}^*)
\end{equation}
where the second inequality follows from the fact that $-x/(x-g)$ is an increasing function for $x > g$. If $\mu(N) = \mu(M_{V, E}^*)$, then all the inequality in \eqref{E1} are equalities. From the third equality, we get $g = c$, from the first we get $W = H^0(X, F)$, from the second and Lemma \ref{lem 2-2}, we get $\deg F = d$ and $\text{rk} F =  r+ 1$. But then we have
\begin{align*}
h^0(X, F) = d + (r+1)(1-g) = h^0(X, E) + 1 - g > h^0(X, E).
\end{align*}
This contradicts that fact that $h^0(X, F) = \dim W \leq \dim V \leq h^0(X, E)$. Hence $\text{rk} F \leq r = \text{rk} E$. By Lemma \ref{lem 2-2}, $\mu(F) \leq \mu(E)$.
\end{proof}

\vspace{0.2 cm}

\begin{Remark}
Let $L$ be a torsion-free sheaf on $X$ of rank one and degree $d \geq 2g + 2c$, $V \subseteq H^0(X, L)$ be a generating subspace of codimension $c, 1 \leq c \leq g$ and $N$ be a stable subbundle of $M_{V, L}^*$ of maximal slope. Consider the following diagram as obtained in Lemma \ref{lem 2-2}:
 \begin{center}
 \begin{tikzpicture}
  \matrix(m)[matrix of math nodes,row sep=2.0em, column sep=3.5em,text height=1.5ex, 
 text depth=0.25ex]
 {  0 &  M_{V, L}^* & V \otimes \mathcal{O}_X & L & 0\\
 0 & N & W \otimes \mathcal{O}_X & F & 0 \\};
 \path[->](m-1-1) edge (m-1-2);
 \path[->](m-1-2) edge  (m-1-3);
 \path[->](m-1-3) edge  (m-1-4);
 \path[->](m-1-4) edge  (m-1-5);
 \path[->](m-2-1) edge (m-2-2);
 \path[->](m-2-2) edge  (m-2-3);
 \path[->](m-2-3) edge  (m-2-4);
 \path[->](m-2-4) edge  (m-2-5);
 \path[right hook ->](m-2-2) edge  (m-1-2);
 \path[right hook ->](m-2-3) edge  (m-1-3);
 \path[->](m-2-4) edge node[right]{$\sigma$}  (m-1-4);
 \end{tikzpicture}
 \end{center}
Then by Lemma \ref{lem 2-3}, $F$ is a torsion-free sheaf of rank one and $\sigma$ is an inclusion. Let $b := \text{codim}_{H^0(X, F)}(W)$ and $s := d - \deg F$. Then by \cite[Remark 2.4]{Mistretta}, we get
\begin{equation}\label{E2}
0 < c -b < s \leq \dfrac{d}{g + c}(c - b)
\end{equation}
In particular, $s \geq 2$. For the rest of the article, we will use these notations.
\end{Remark}

\section{Stability of Kernel Sheaf: Single Node}

Let $X$ be an integral projective curve with an ordinary double point and let $g (\geq 2)$ be the arithmetic genus of $X$. Let $\pi: Y \longrightarrow X$ be the normalization of $X$. Then the genus of $Y$ is $g_Y = g - 1$.

\begin{Lemma}\label{lem 3-1}
Let $F, L$ be two rank one torsion-free sheaves on $X$ such that $\mathcal{H}om_{\mathcal{O}_X}(F, L)$ is a line bundle on $X$. Then both $F$ and $L$ are locally free.
\end{Lemma}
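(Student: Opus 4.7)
The statement is local on $X$, and at every smooth point a rank one torsion-free sheaf is automatically a line bundle, so the entire content of the lemma concerns the stalks at the node $p$. Let $A = \mathcal{O}_{X,p} \cong \mathbb{C}[[x,y]]/(xy)$, with maximal ideal $\mathfrak{m} = (x,y)$ and normalization $\widetilde{A} = \mathbb{C}[[x]] \times \mathbb{C}[[y]]$; I need to show that if $M, N$ are rank one torsion-free $A$-modules with $\mathrm{Hom}_A(M, N) \cong A$, then both $M$ and $N$ are free.

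The key preliminary is the classical classification of rank one torsion-free modules over the local ring of a node: up to isomorphism there are exactly two, namely $A$ and $\mathfrak{m}$ (this is standard and is used, for instance, in \cite{Bhosle-1}). This reduces the lemma to checking the four cases $(M, N) \in \{A, \mathfrak{m}\}^2$, and the target becomes that $\mathrm{Hom}_A(M, N)$ is free of rank one exactly when $M = N = A$.

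I would then dispose of the three non-trivial cases by direct computation. Plainly $\mathrm{Hom}_A(A, \mathfrak{m}) = \mathfrak{m}$ is not free. For the other two, using the presentation of $\mathfrak{m}$ by generators $x, y$ with relations $yx = xy = 0$, a homomorphism $\phi \colon \mathfrak{m} \to A$ is determined by $\phi(x) \in \mathrm{Ann}_A(y) = (x)$ and $\phi(y) \in \mathrm{Ann}_A(x) = (y)$, each taken modulo the corresponding annihilator; this identifies $\mathrm{Hom}_A(\mathfrak{m}, A) \cong A/(y) \oplus A/(x) \cong \widetilde{A}$ as $A$-modules, and the same calculation gives $\mathrm{Hom}_A(\mathfrak{m}, \mathfrak{m}) \cong \widetilde{A}$. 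Since $\widetilde{A}/\mathfrak{m}\widetilde{A} \cong \mathbb{C} \times \mathbb{C}$ is two-dimensional over $A/\mathfrak{m}$, the module $\widetilde{A}$ is not $A$-free of rank one, so the only possibility compatible with $\mathrm{Hom}_A(M,N) \cong A$ is $M = N = A$. The heart of the argument is thus the observation that $\mathrm{Hom}_A(\mathfrak{m}, -)$ returns the normalization rather than a free module; once that is in hand, the dichotomy between locally free and not is immediate and I do not anticipate any further obstacle.
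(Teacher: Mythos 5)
Your proof is correct and takes essentially the same route as the paper: both reduce to the stalk at the node, use the classification $F_x, L_x \in \{A, \mathfrak{m}\}$, and check the four possible $\mathrm{Hom}_A$ modules. The only difference is that you verify by direct computation that $\mathrm{Hom}_A(A,\mathfrak{m}) \cong \mathfrak{m}$ and $\mathrm{Hom}_A(\mathfrak{m},A) \cong \mathrm{Hom}_A(\mathfrak{m},\mathfrak{m}) \cong \widetilde{A}$ are not free, whereas the paper cites Seshadri (Chapter 8, Lemma 4) for the fact that only $\mathrm{Hom}_A(A,A)$ is free; your computation is accurate, so this just makes the argument self-contained.
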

\begin{proof}
Let $x \in X$ be the node. Write $A  = \mathcal{O}_{X, x}$ and $\mathfrak{m} \subset A$ be the unique maximal ideal. Then $F_x \cong A$ or $F_x \cong \mathfrak{m}$ (resp. $L_x \cong A$ or $L_x \cong \mathfrak{m}$) depending on $F$ (resp. $L$) is locally free or not. Thus we have four choices
$$
\text{Hom}_A(A, A), \, \text{Hom}_A(\mathfrak{m}, A), \, \text{Hom}_A(A, \mathfrak{m}), \, \text{Hom}_A(\mathfrak{m}, \mathfrak{m})
$$
for the free $A$-module $\mathcal{H}om_{\mathcal{O}_X}(F, L)_x \cong \text{Hom}_A(F_x, L_x)$. By \cite[Chapter 8, Lemma 4]{Seshadri}, only $\text{Hom}_A(A, A)$ is the free $A$-module. Hence both $F$ and $L$ are locally free.

\end{proof}

\begin{Theorem}\label{thm 3-1}
Let $X$ be an integral projective curve with an ordinary double point, $g \geq 2$ be the arithmetic genus of $X$ and let $L$ be a globally generated torsion-free sheaf of rank one and degree $d > 2g + 2c, 1 \leq c \leq g$. Then for a generic subspace $V \subseteq H^0(X, L)$ of codimension $c$ that generates $L$, $M_{V, L}^*$ is stable.
\end{Theorem}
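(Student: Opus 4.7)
The plan is to adapt the dimension-counting strategy of Mistretta \cite{Mistretta} to the nodal setting, using Lemmas \ref{lem 2-2}, \ref{lem 2-3}, and \ref{lem 3-1}. First I would suppose, for contradiction, that for a generic $V \in \text{Gr}(c, H^0(X, L))$ the kernel sheaf $M_{V, L}^*$ is unstable, and pick a stable saturated subbundle $N \subset M_{V, L}^*$ of maximal slope with $\mu(N) \geq \mu(M_{V, L}^*)$. Applying Lemma \ref{lem 2-2} yields a torsion-free sheaf $F$ (with no trivial summand), a generating subspace $W \subseteq H^0(X, F)$, and a non-zero morphism $\sigma: F \to L$ such that $N = M_{W, F}^*$; Lemma \ref{lem 2-3} upgrades this to $\mathrm{rk}(F) = 1$ with $\sigma$ injective; and the Remark after Lemma \ref{lem 2-3} pins the numerical invariants $s := d - \deg F$ and $b := \mathrm{codim}_{H^0(X, F)} W$ into the finite set defined by $0 < c - b < s \leq d(c - b)/(g + c)$.

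For each admissible pair $(s, b)$ I would then build a parameter space $\mathcal{P}_{s, b}$ of quadruples $(F, \sigma, W, V)$ realizing a destabilization with these invariants, and show that the natural forgetful map $\mathcal{P}_{s, b} \to \text{Gr}(c, H^0(X, L))$ has image a proper closed subvariety. Concretely, pairs $(F, \sigma)$ with $\sigma$ injective and cokernel of length $s$ are parametrized by the Quot scheme $\mathrm{Quot}^s(L)$ of length-$s$ torsion quotients of $L$, of dimension $s$ over the generic (smooth) locus; for each such $F$, the subspace $W$ is a point of $\mathrm{Gr}(h^0(X, F) - b,\, H^0(X, F))$ of dimension $b\bigl(h^0(X, F) - b\bigr)$; and given $(F, \sigma, W)$, the subspaces $V \subseteq H^0(X, L)$ of codimension $c$ containing $\sigma(W)$ form a sub-Grassmannian of dimension $c\bigl(h^0(X, L) - c - (h^0(X, F) - b)\bigr)$.

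Summing these contributions and using $h^0(X, F) = \deg F + 1 - g$ (which holds because the hypothesis $d > 2g + 2c$ together with $s \leq d(c - b)/(g + c)$ forces $\deg F > 2g - 2$, hence $h^1(X, F) = 0$), the target inequality $\dim \mathcal{P}_{s, b} < c\bigl(h^0(X, L) - c\bigr)$ reduces to the purely numerical claim
\[
(c - b)\bigl(h^0(X, F) - b\bigr) > s,
\]
which, after substituting $h^0(X, F) = d - s + 1 - g$, follows directly from $s \leq d(c - b)/(g + c)$ and the strict bound $d > 2g + 2c$ by an elementary manipulation parallel to the smooth case in \cite{Mistretta}. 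Since only finitely many pairs $(s, b)$ need checking, the union of the images of all $\mathcal{P}_{s, b}$ in $\text{Gr}(c, H^0(X, L))$ remains a proper closed subset, contradicting genericity.

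The main obstacle is controlling the $(F, \sigma)$-factor of $\mathcal{P}_{s, b}$ at the node. When the length-$s$ quotient of $L$ is supported on the smooth locus, $F$ is automatically locally free and $\mathcal{H}om_{\mathcal{O}_X}(F, L)$ is a line bundle; one then has the expected dimension $s$ for pairs $(F, \sigma)$. When the quotient is supported at the node, $F$ may fail to be locally free there and, by Lemma \ref{lem 3-1}, $\mathcal{H}om_{\mathcal{O}_X}(F, L)$ is no longer invertible, so an a priori larger $h^0\bigl(\mathcal{H}om_{\mathcal{O}_X}(F, L)\bigr)$ could inflate the dimension of the fiber of injections. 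Verifying that the boundary stratum of $\mathrm{Quot}^s(L)$ lies in strictly lower dimension (so that the total dimension of $(F, \sigma)$ does not exceed $s$), and assembling the smooth and nodal contributions into a single uniform estimate, is the nontrivial ingredient beyond Mistretta's original argument; the remainder is essentially bookkeeping.
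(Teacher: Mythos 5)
Your overall strategy is the same as the paper's (Mistretta-style dimension count of destabilizing data against $\dim \mathrm{Gr}(c,H^0(X,L))$, using Lemmas \ref{lem 2-2}, \ref{lem 2-3} and the range \eqref{E2}), but the one step you explicitly defer is precisely the heart of the proof in the nodal setting, so as written there is a genuine gap. You propose to bound the $(F,\sigma)$-part of the parameter space by $\dim \mathrm{Quot}^s(L)\le s$, acknowledging that the strata where the quotient meets the node, or where $F$ or $L$ fails to be locally free, are ``the nontrivial ingredient'' --- but you give no argument for them. This requires either a local analysis of colength-$s$ submodules of $\mathfrak{m}$ at the node (equivalently of the punctual Quot scheme of a rank-one torsion-free sheaf) or a citation, and neither is supplied. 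The paper does not prove such a Quot-scheme bound at all; instead it fibers the parameter space $\mathcal{D}_{b,s}$ over the compactified Jacobian $\bar{J}^{d-s}(X)$ via $\eta$, bounds the base by a Brill--Noether-type estimate and the fiber by $h^0(X,\mathcal{H}om_{\mathcal O_X}(F,L))-1$, and controls the latter case by case: when $\mathcal{H}om(F,L)$ is not locally free it is written as $\pi_*\mathcal{E}$ for a line bundle $\mathcal{E}$ on the normalization (Seshadri) and Clifford's theorem is applied on $Y$; when it is locally free (forcing $F,L$ locally free by Lemma \ref{lem 3-1}) the nodal Clifford theorem of \cite{Karl}, \cite{Eisenbud} is used. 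This yields only the weaker bound $\tfrac{3}{2}s + b(d-s-g+1-b)+c(s+b-c)$, and the hypothesis $d>2g+2c$ (i.e.\ $d/(g+c)>2$) is exactly what is needed to beat it. Your sharper bound $s$ would reduce the numerics to $s<(c-b)(h^0(X,F)-b)$, which indeed follows from \eqref{E2} under even weaker degree hypotheses --- so if you proved the Quot bound your argument would be stronger than the paper's --- but without that proof the argument is incomplete, and you should be aware the paper's actual route avoids the question entirely.

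A secondary, fixable issue: you invoke Lemma \ref{lem 2-3} to get $\mathrm{rk}\,F=1$ and $\sigma$ injective, but that lemma assumes $h^1(X,F)=0$, and your later justification of $h^1(X,F)=0$ (via $\deg F=d-s>2g-2$) already presupposes that $F$ has rank one and $\sigma$ is injective, so it is circular as stated. The paper closes this loop by first noting $\mu(N)\ge\mu(M_{V,L}^*)>-2$ (using $d>2g+2c$) and citing \cite[Proposition 3.11]{Bhosle-2} to conclude $h^1(X,F)=0$ for the sheaf $F$ produced by Lemma \ref{lem 2-2}, before applying Lemma \ref{lem 2-3}; you need this (or an equivalent Paranjape--Ramanan-type argument) at the corresponding point of your proof.
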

\begin{proof}
Let $N$ be a saturated stable sub-bundle of $M_{V, L}^*$ of maximal slope with $\mu(N) \geq \mu(M_{V, L}^*)$. Consider the following commutative diagram as obtained in Lemma \ref{lem 2-2}:
 \begin{center}
 \begin{tikzpicture}
  \matrix(m)[matrix of math nodes,row sep=2.0em, column sep=3.5em,text height=1.5ex, 
 text depth=0.25ex]
 {  0 &  M_{V, L}^* & V \otimes \mathcal{O}_X & L & 0\\
 0 & N & W \otimes \mathcal{O}_X & F & 0 \\};
 \path[->](m-1-1) edge (m-1-2);
 \path[->](m-1-2) edge  (m-1-3);
 \path[->](m-1-3) edge  (m-1-4);
 \path[->](m-1-4) edge  (m-1-5);
 \path[->](m-2-1) edge (m-2-2);
 \path[->](m-2-2) edge  (m-2-3);
 \path[->](m-2-3) edge  (m-2-4);
 \path[->](m-2-4) edge  (m-2-5);
 \path[right hook ->](m-2-2) edge  (m-1-2);
 \path[right hook ->](m-2-3) edge  (m-1-3);
 \path[->](m-2-4) edge node[right]{$\sigma$}  (m-1-4);
 \end{tikzpicture}
 \end{center}
Since $d > 2g + 2c$, $h^1(X, L) = 0$. So from Riemann-Roch, we get
$$
\dim V = h^0(X, L) - c = d + 1 - g -c.
$$
Then
$$
\mu(M_{V, L}^*) = \dfrac{\deg L}{\dim V - 1} = \dfrac{-d}{d - g - c} > -2
$$
since $d > 2g + 2c$. Now by \cite[Proposition 3.11]{Bhosle-2} we have $h^1(X, F) = 0$. By Lemma \ref{lem 2-3}, $F$ is a torsion-free sheaf of rank one and $\sigma$ is an inclusion. Let $E := \mathcal{H}om_{\mathcal{O}_X}(F, L)$. Then $E$ is a torsion-free sheaf of rank one on $X$. By \cite[Lemma 2.5]{Bhosle-1}, we have
\begin{equation}\label{E3}
\deg E  = 
\begin{cases}
s + 1, &\text{if both $F$ and $L$ are not locally free}\\
s, &\text{if either $F$ or $L$ is locally free}
\end{cases}
\end{equation}
Consider the parameter space
\begin{align*}
\mathcal{D}_{b,s} =  \{(F, F \hookrightarrow L, W \subset H^0(X,F), V \subset H^0(X,L)) \mid F \in \bar{J}^{d-s}(X),\\
(\phi: F \hookrightarrow L \in \mathbb{P}(H^0(X,E) ),W \in \text{Gr}(b,H^0(X,F)),\\ 
V \in \text{Gr}(c,H^0(X,L)),\phi_{|W}: W \hookrightarrow V \subset H^0(X,L)\}  
\end{align*}
where $\bar{J}^{d-s}(X)$ is the compactified Jacobian on $X$ parametrizing degree $d-s$ torsion-free sheaves on $X$. There are two projection maps
$$
\eta: \mathcal{D}_{b, s} \longrightarrow \bar{J}^{d-s}(X), \,\,\, (F, F \hookrightarrow L, W, V) \mapsto F
$$ 
and
$$
\rho: \mathcal{D}_{b, s} \longrightarrow \text{Gr}(c,H^0(X,L)), \,\,\, (F, F \hookrightarrow L, W, V) \mapsto V.
$$
Then $\dim \mathcal{D}_{b,s} \leq \dim \eta(\mathcal{D}_{b,s}) + \dim \eta^{-1}(F)$, for some $F \in \eta(\mathcal{D}_{b,s})$. The fiber of $\eta$ over $F \in \eta(\mathcal{D}_{b, s})$ has dimension (\cite[Theorem 2.7, Page 177]{Mistretta})
$$
\dim \mathbb{P}(H^0(X, \mathcal{H}om_{\mathcal{O}_X}(F, L))) \times \text{Gr}(b,H^0(X,F)) \times \text{Gr}(c,H^0(X,L)/W)
$$
and image $\eta(\mathcal{D}_{b,s})$ consists of all torsion-free sheaves $F \in \bar{J}^{d-s}(X)$ such that $h^0(X, \mathcal{H}om_{\mathcal{O}_X}(F, L)) \geq 1$. The space $\mathcal{D}_{b,s}$ parametrizes all the possible choices $(F, F \hookrightarrow L, W \subset H^0(X,F), V \subset H^0(X,L))$ for which there may exists a destabilizing subsheaf of $M_{V, L}^*$. We will consider two separate cases: when $E$ is a line bundle and when $E$ is not a line bundle.

\vspace{0.1 cm}

\underline{$E$ is not a line bundle:} By \cite[Chapter 8, Proposition 10]{Seshadri}, there is a line bundle $\mathcal{E}$ on $Y$ such that $E  = \pi_*(\mathcal{E})$ and $\deg \mathcal{E} = \deg E - 1$.

\underline{Case I:} Suppose both $F$ and $L$ are not locally free. By \eqref{E3}, $\deg \mathcal{E} = s$ and by Clifford's theorem  (applied on $\mathcal{E}$) yields
\begin{equation}\label{E4}
h^0(X, E) = h^0(Y, \mathcal{E})  \leq 
\begin{cases}
s/2  + 1 &\text{if} \,\, 0 \leq s \leq 2g - 4\\
s - g + 2 &\text{if} \,\, s \geq 2g - 3.
\end{cases}
\end{equation}
Since $h^0(X, E) \neq 0 \Longleftrightarrow h^0(Y, \mathcal{E}) \neq 0$ and
$$
\dim\{ M \in \text{Pic}^s(Y) \, : \, h^0(Y, M) \neq 0 \}  = \min (s, g-1)
$$
we get
\begin{align}\label{E5}
\begin{split}
\dim(\mathcal{D}_{b,s}) &\leq \min(s,g-1)  + \max(s-g +1, s/2)\\
&+ b(d-s-g+1-b) + c(s+b-c) \\
&\leq \frac{3}{2}s + b(d-s-g+1-b) + c(s+b-c)
\end{split}
\end{align}

\underline{Case II:} Suppose either $F$ and $L$ is locally free.  By \eqref{E3}, $\deg \mathcal{E} = s-1$ and by Clifford's theorem  (applied on $\mathcal{E}$)
\begin{equation}\label{E6}
h^0(X, E) = h^0(Y, \mathcal{E}) \leq 
\begin{cases}
(s-1)/2  + 1 &\text{if} \,\, 0 \leq s \leq 2g - 3\\
s - g + 1 &\text{if} \,\, s \geq 2g - 2.
\end{cases}
\end{equation}
Arguing as above, we get
\begin{align}\label{E7}
\begin{split}
\dim(\mathcal{D}_{b,s}) &\leq \min(s - 1,g-1) + \max(s-g, (s-1)/2)\\
&+ b(d-s-g+1-b) + c(s+b-c) \\
&\leq \frac{3}{2}s + b(d-s-g+1-b) + c(s+b-c)
\end{split}
\end{align}

\underline{$E$ is a line bundle:} By Lemma \ref{lem 3-1}, both $F$ and $L$ are locally free. By \eqref{E3}, $\deg E = s$. By Clifford theorem for line bundles over irreducible nodal curves (\cite[Theorem 3.1]{Karl}, \cite[Theorem A, Appendix by J. Harris]{Eisenbud}), we have
\begin{equation}\label{E8}
h^0(X, E) \leq 
\begin{cases}
s/2  + 1 &\text{if} \,\, 0 \leq s \leq 2g - 2\\
s - g + 1 &\text{if} \,\, s \geq 2g - 1.
\end{cases}
\end{equation}
Since $\dim \{ M \in \text{Pic}^s(X) \, : \, h^0(X, M) \geq 1 \} \leq \min( s, g)$, as above, we get
\begin{align}\label{E9}
\begin{split}
\dim(\mathcal{D}_{b,s}) &\leq \min(s, g) + \max(s-g, s/2)\\
&+ b(d-s-g+1-b) + c(s+b-c) \\
&\leq \frac{3}{2}s + b(d-s-g+1-b) + c(s+b-c)
\end{split}
\end{align}

The argument as given in \cite[Theorem 2.7, Page 177]{Mistretta} shows that
\begin{equation}\label{E10}
\frac{3}{2}s + b(d-s-g+1-b) + c(s+b-c) < c(d - g+ 1) - c^2
\end{equation}
For convenience, we will briefly recall the proof here: condition \eqref{E10} is equivalent to $\frac{3s}{2(c-b)} + s + b <  d+1-g$. From \eqref{E2}, we have
\begin{align*}
\begin{split}
\frac{3s}{2(c-b)} + s + b \leq \dfrac{\frac{3}{2}+ (c-b)}{g+c}d + b
\end{split}
\end{align*}
Notice that,
\begin{align*}
\dfrac{\frac{3}{2}+ (c-b)}{g+c}d + b < d + 1-g \Longleftrightarrow \dfrac{b+g-1}{b+g-\frac{3}{2}} < \frac{d}{g+c} \, .
\end{align*}
Since $b+g \geq 2, b + g - 1 \leq 2(b+g) - 3$. Moreover, $d/(g + c) > 2$. Thus the last inequality holds. Hence
$$
\dim(\mathcal{D}_{b,s}) < \dim \text{Gr}(c, H^0(X, L)).
$$
Then for a generic choice of $V \in \text{Gr}(c,H^0(X,L))$, no such destabilizing data $(F, F \hookrightarrow L, W \subset H^0(X,F), V \subset H^0(X,L)) \in \mathcal{D}_{b,s}$ exists and hence $M_{V, L}^*$ is stable.
\end{proof}

\begin{Theorem}\label{thm 3-2}
Let $X$ be an integral projective curve of arithmetic genus $g \geq 3$ with an ordinary double point  and let $L$ be a globally generated torsion-free sheaf of rank one and degree $d = 2g + 2c, 1 \leq c \leq g$. Then for a generic subspace $V \subseteq H^0(X, L)$ of codimension $c$ that generates $L$, $M_{V, L}^*$ is semistable. Moreover, if $X$ is non-hyperelliptic, then $M_{V, L}^*$ is stable.
\end{Theorem}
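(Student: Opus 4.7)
The plan is to adapt the parameter space strategy of Theorem \ref{thm 3-1} to the borderline degree $d = 2g+2c$, where $\mu(M_{V,L}^*) = -d/(d-g-c) = -2$.

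For semistability, suppose $N$ is a saturated stable subbundle of $M_{V,L}^*$ with $\mu(N) > -2$. Since the slope inequality $\mu(N) > -2$ is strict, the argument of \cite[Proposition 3.11]{Bhosle-2} still applies and yields $h^1(X, F) = 0$ for the sheaf $F$ from Lemma \ref{lem 2-2}. Lemma \ref{lem 2-3} then gives $\text{rk}(F) = 1$, and the parameter space $\mathcal{D}_{b,s}$ of Theorem \ref{thm 3-1} together with its dimension bound go through verbatim. The critical inequality becomes $2 = d/(g+c) > (b+g-1)/(b+g-3/2)$, which holds whenever $b+g > 2$; this is automatic since $g \geq 3$. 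So $\dim \mathcal{D}_{b,s} < \dim \text{Gr}(c, H^0(X,L))$, and for a generic subspace $V$ no such strictly destabilizing $N$ exists, giving semistability.

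For stability when $X$ is non-hyperelliptic, one must further exclude the boundary case $\mu(N) = -2$, i.e., $\dim W = \deg(F)/2 + 1$. First, if $F$ is not locally free, then by \cite[Chapter 8, Proposition 10]{Seshadri} we may write $F = \pi_*\mathcal{F}$ for a line bundle $\mathcal{F}$ on the normalization $Y$; comparing Euler characteristics gives $\deg \mathcal{F} = \deg F - 1$ and $h^0(X,F) = h^0(Y,\mathcal{F})$, so the classical Clifford inequality on $Y$ (genus $g-1$) yields $h^0(X,F) \leq \deg(F)/2 + 1/2 < \dim W$, contradicting $\dim W \leq h^0(X,F)$. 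Hence $F$ must be locally free, and Clifford's theorem for line bundles on nodal curves (\cite[Theorem 3.1]{Karl}, \cite[Theorem A, Appendix by J. Harris]{Eisenbud}) forces the Clifford equality $h^0(X,F) = \deg(F)/2 + 1$. Under the non-hyperelliptic hypothesis, this is attained only by $F \cong \mathcal{O}_X$ or $F \cong \omega_X$. A direct numerical check using \eqref{E2} eliminates both candidates: $F \cong \mathcal{O}_X$ gives $s = d = 2g+2c$, while $F \cong \omega_X$ gives $s = 2c+2$, both in contradiction with $s \leq 2(c-b) \leq 2c$. Thus the boundary stratum is empty and $M_{V,L}^*$ is stable.

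The main obstacle is to pin down the precise Clifford-equality classification on non-hyperelliptic nodal curves, and to handle the passage to the normalization $Y$ cleanly in the non-locally-free sub-case (the degree shift $\deg \mathcal{F} = \deg F - 1$ is precisely what forces the boundary stratum to be empty there). On hyperelliptic $X$, Clifford equality additionally admits multiples of the hyperelliptic $g^1_2$, some of which do satisfy the numerical constraints of the boundary stratum; this is exactly the obstruction to stability without the non-hyperelliptic hypothesis, and explains why only semistability can be claimed in general.
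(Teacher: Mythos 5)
Your semistability paragraph is essentially the paper's argument: for $\mu(N) > -2$ one gets $h^1(X,F)=0$ from \cite[Proposition 3.11]{Bhosle-2}, and the dimension count of Theorem \ref{thm 3-1} still closes at $d=2g+2c$ because $b+g>2$. Note, however, that the same count in fact disposes of the equality case $\mu(N)=-2$ as well, \emph{provided} $h^1(X,F)=0$, since \eqref{E2} is derived from the non-strict inequality $\mu(N)\ge\mu(M_{V,L}^*)$; you will need this observation, because your Clifford argument in the stability part only controls special sheaves.

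The stability part has a genuine gap. First, your claim that the boundary case forces Clifford equality, hence $F\cong\mathcal{O}_X$ or $F\cong K_X$, fails when $h^1(X,F)=0$: for nonspecial $F$ (e.g.\ $\deg F\ge 2g-1$, which occurs for small $s$) one has $h^0(X,F)=\deg F+1-g\ge \deg F/2+1$ as soon as $\deg F\ge 2g$, so no contradiction arises and no equality is forced; this case must instead be absorbed into the parameter-space count, as above. More seriously, your numerical elimination of $F\cong K_X$ is invalid: the bound $s\le\frac{d}{g+c}(c-b)$ in \eqref{E2} is obtained by writing $\dim W=h^0(X,F)-b=\deg F+1-g-b$, i.e.\ under the hypothesis $h^1(X,F)=0$ of Lemma \ref{lem 2-3}. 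For $F=K_X$, $W=H^0(X,K_X)$ one has $h^1(X,F)=1$, the corrected bound reads $s\le\frac{d}{g+c}\bigl(c-b+h^1(X,F)\bigr)$, and $s=2c+2$, $b=0$ satisfies it with equality; indeed $\mu\bigl(M^*_{H^0(K_X),K_X}\bigr)=-2=\mu(M_{V,L}^*)$, so the canonical sheaf is a genuine potential destabilizer that cannot be excluded numerically whenever $V\supseteq H^0(X,K_X)$. This is exactly where the paper does additional work: by \cite[Proposition 3.11]{Bhosle-2} (this is where non-hyperellipticity enters) the case $h^1(X,F)\neq 0$ forces $F=K_X$, $W=H^0(X,K_X)$, and then a separate dimension count shows that the locus of data $(K_X\hookrightarrow L,\ V\supseteq H^0(X,K_X))$ has dimension at most $\max\{c+1,\,2c-g+2\}+c(c+1)$, strictly less than $gc+c(c+1)=\dim\mathrm{Gr}(c,H^0(X,L))$ for $g\ge 3$; hence a generic $V$ contains no copy of $H^0(X,K_X)$ coming from an inclusion $K_X\hookrightarrow L$. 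That count is the heart of the stability statement and is missing from your argument. (The same specialness caveat applies to your non-locally-free subcase: Clifford on $Y$ requires $\mathcal{F}$ special, though if it is nonspecial then $h^1(X,F)=0$ and the main count applies.)
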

\begin{proof}
We proceed as in Theorem \ref{thm 3-1}. Let $N$ be a stable sub-bundle of $M_{V, L}^*$ of maximal slope such that $\mu(N) > \mu(M_{V, L}^*) = -2$. Then Lemma \ref{lem 2-2}, Lemma \ref{lem 2-3} and \cite[Proposition 3.11]{Bhosle-2}, we have a commutative diagram
\begin{center}
 \begin{tikzpicture}
  \matrix(m)[matrix of math nodes,row sep=2.0em, column sep=3.5em,text height=1.5ex, 
 text depth=0.25ex]
 {  0 &  M_{V, L}^* & V \otimes \mathcal{O}_X & L & 0\\
 0 & N & W \otimes \mathcal{O}_X & F & 0 \\};
 \path[->](m-1-1) edge (m-1-2);
 \path[->](m-1-2) edge  (m-1-3);
 \path[->](m-1-3) edge  (m-1-4);
 \path[->](m-1-4) edge  (m-1-5);
 \path[->](m-2-1) edge (m-2-2);
 \path[->](m-2-2) edge  (m-2-3);
 \path[->](m-2-3) edge  (m-2-4);
 \path[->](m-2-4) edge  (m-2-5);
 \path[right hook ->](m-2-2) edge  (m-1-2);
 \path[right hook ->](m-2-3) edge  (m-1-3);
 \path[->](m-2-4) edge node[right]{$\sigma$}  (m-1-4);
 \end{tikzpicture}
 \end{center}
where $F$ is torsion-free subsheaf of $L$, $\sigma$ is injective and $h^1(X, F) = 0$. Arguing as in Theorem \ref{thm 3-1}, we get that for a generic choice of $V \subseteq H^0(X, L)$ of codimension $c$, no such $N$ exists. Thus $M_{V, L}^*$ is semistable.

\vspace{0.1 cm}

Now let $N$ be a stable sub-bundle of $M_{V, L}^*$ of maximal slope with $\mu(N) = \mu(M_{V, L}^*) = -2$ and consider the diagram
\begin{center}
 \begin{tikzpicture}
  \matrix(m)[matrix of math nodes,row sep=2.0em, column sep=3.5em,text height=1.5ex, 
 text depth=0.25ex]
 {  0 &  M_{V, L}^* & V \otimes \mathcal{O}_X & L & 0\\
 0 & N & W \otimes \mathcal{O}_X & F & 0 \\};
 \path[->](m-1-1) edge (m-1-2);
 \path[->](m-1-2) edge  (m-1-3);
 \path[->](m-1-3) edge  (m-1-4);
 \path[->](m-1-4) edge  (m-1-5);
 \path[->](m-2-1) edge (m-2-2);
 \path[->](m-2-2) edge  (m-2-3);
 \path[->](m-2-3) edge  (m-2-4);
 \path[->](m-2-4) edge  (m-2-5);
 \path[right hook ->](m-2-2) edge  (m-1-2);
 \path[right hook ->](m-2-3) edge  (m-1-3);
 \path[->](m-2-4) edge node[right]{$\sigma$}  (m-1-4);
 \end{tikzpicture}
 \end{center}
as obtained in Lemma \ref{lem 2-2}. If $h^1(X, F) = 0$, then we can use the above argument to conclude the stability of $M_{V, L}^*$. So assume now $h^1(X, F) \neq 0$. Since $X$ is non-hyperelliptic,  by \cite[Proposition 3.11]{Bhosle-2}, we get $F = K_X$, $W = H^0(X, K_X)$ and $N = M_{H^0(X, K_X), K_X}$. In this case,
$$
\mathcal{D}_{b,s} = \{ (K_X \hookrightarrow L, V \subseteq H^0(X, L)) \,  : \, H^0(X, K_X) \subseteq V \}.
$$
Let $E  = \mathcal{H}om_{\mathcal{O}_X}(F, L)$. Then by \cite[Lemma 2.5]{Bhosle-1} $\deg E = 2c+ 2$. 
If $L$ is not locally free, then by Lemma \ref{lem 3-1}, $E$ is not locally free. In this case, similar to the case of \eqref{E6}, we get
\begin{equation}\label{E11}
h^0(X, E) \leq \max \{ (2c+1)/2 + 1, 2c - g + 3 \}.
\end{equation}
If $L$ is locally free, then arguing as in \eqref{E8}, we get
\begin{equation}\label{E12}
h^0(X, E) \leq \max \{ c+2, \, 2c - g + 3 \}.
\end{equation}
Since $\dim H^0(X, L)/H^0(X, K_X) = 2c + 1$, we have
\begin{equation}\label{E13}
\dim \mathcal{D}_{b, s} \leq \max \{ c+1, 2c - g + 2 \} + c(c + 1).
\end{equation}
Since $g \geq 3$, we have
\begin{align*}
c + 1 < gc \,\,\, \text{and} \,\,\, 2c - g + 2 \leq 2c - 3 + 2 < gc - 1 < gc.
\end{align*}
Thus $\dim \mathcal{D}_{b, s} < gc + c(c+1) = \dim \text{Gr}(c, H^0(X, L))$. Hence for a generic $V \in \text{Gr}(c, H^0(X, L))$, $M_{V, L}^*$ is stable.
\end{proof}

\section{Stability of Kernel Sheaf: Multi Node}

Let $X$ be an integral projective curve of arithmetic genus $g \geq 2$ with $n$-nodes (ordinary double points), say $x_1, \cdots, x_n$. Let $L$ be a torsion-free sheaf of rank one on $X$. We say $L$ has {\it local type one} at $x_i$ if $L_{x_i} \cong \mathfrak{m}_i$ where $\mathfrak{m}_i \subseteq \mathcal{O}_{X, x_i}$ be the unique maximal ideal and $L$ has {\it local type zero} at $x_i$ if $L_{x_i} \cong \mathcal{O}_{X, x_i}$. Let $\pi: Y \longrightarrow X$ be the normalization. Then the genus of $Y$ is $g_Y =  g - n$.


\begin{Theorem}\label{thm 4-1}
Let $X$ be an integral projective curve of arithmetic genus $g \geq 3$ with $n$-nodes. Let $L$ be a globally generated torsion-free sheaf of rank one and degree $d > 10(g+c)/3, 1 \leq c \leq g$. Then for a generic subspace $V \subseteq H^0(X, L)$ of codimension $c$ that generates $L$, $M_{V, L}^*$ is stable. Moreover, if the number of node $n \leq g - 2$, then $M_{V, L}^*$ is stable for $d > 2g + 2c$.
\end{Theorem}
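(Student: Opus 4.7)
The plan is to follow the strategy of Theorem~\ref{thm 3-1} verbatim, adjusting only the Clifford-type estimates to accommodate several nodes. Given a saturated stable subbundle $N \subseteq M_{V,L}^*$ of maximal slope with $\mu(N) \geq \mu(M_{V,L}^*)$, Lemmas~\ref{lem 2-2},~\ref{lem 2-3} together with \cite[Proposition~3.11]{Bhosle-2} again produce a rank-one torsion-free subsheaf $F \hookrightarrow L$ with $h^1(X,F) = 0$ and a generating subspace $W \subseteq H^0(X,F)$. One constructs the same parameter space $\mathcal{D}_{b,s}$ with projections
$\eta : \mathcal{D}_{b,s} \to \bar{J}^{d-s}(X)$ and $\rho : \mathcal{D}_{b,s} \to \text{Gr}(c, H^0(X,L))$, and the goal is to verify
\[
\dim \mathcal{D}_{b,s} \;<\; \dim \text{Gr}\bigl(c,H^0(X,L)\bigr) \;=\; c(d+1-g-c),
\]
from which stability of $M_{V,L}^*$ for a generic $V$ follows by the same argument as in the single-node case.

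The essential new input is controlling the fiber dimension of $\eta$ and the dimension of $\eta(\mathcal{D}_{b,s})$ in the presence of several nodes. Setting $E := \mathcal{H}om_{\mathcal{O}_X}(F,L)$, let $T \subseteq \{x_1,\dots,x_n\}$ be the set of nodes at which $E$ fails to be locally free. By \cite[Chapter~8, Proposition~10]{Seshadri} there is a line bundle $\mathcal{E}$ on the partial normalization $\pi_T : Y_T \to X$ with $E = \pi_{T*}\mathcal{E}$; here $Y_T$ has arithmetic genus $g - |T|$, and $\deg \mathcal{E}$ is expressed in terms of $s$ and the local types of $F$ and $L$ at each node by the multi-node extension of \cite[Lemma~2.5]{Bhosle-1}. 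Clifford's theorem applied on $Y_T$ (or the nodal Clifford theorem \cite[Theorem~3.1]{Karl} when both $F$ and $L$ are locally free everywhere) bounds $h^0(X,E) = h^0(Y_T,\mathcal{E})$, while the Brill--Noether estimate
\[
\dim \bigl\{ M \in \text{Pic}^{\deg\mathcal{E}}(Y_T) : h^0(M) \geq 1 \bigr\} \;\leq\; \min\bigl(\deg\mathcal{E},\, g - |T|\bigr)
\]
controls the dimension of the stratum of $\bar{J}^{d-s}(X)$ indexed by $T$, the finite choice of $T$ contributing only discretely.

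Running the bookkeeping of \eqref{E5}--\eqref{E9} case by case over all admissible configurations of local types of $F$ and $L$, one arrives at a bound of the form
\[
\dim \mathcal{D}_{b,s} \;\leq\; \kappa\, s + b(d-s-g+1-b) + c(s+b-c)
\]
for an explicit constant $\kappa$ realized at the worst configuration. The manipulation of \eqref{E10} via \eqref{E2} then translates the required strict inequality into $d/(g+c) > (g+b-1)/(g+b-\kappa)$; taking the worst case over $b \geq 0$ and $g \geq 3$ yields precisely the hypothesis $d > 10(g+c)/3$. For the refined statement, when $n \leq g - 2$ the partial normalization $Y_T$ has arithmetic genus $g - |T| \geq g - n \geq 2$, so Clifford on $Y_T$ is as sharp as in Theorem~\ref{thm 3-1}; the constant $\kappa = 3/2$ then suffices, and the identical computation produces stability under the weaker hypothesis $d > 2g + 2c$.

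The main obstacle will be carrying out the case analysis required to pin down $\kappa$. This demands enumerating, for each node, the four possibilities for the local types of $F$ and $L$, tracking the resulting $\deg\mathcal{E}$ and $g(Y_T) = g - |T|$, and verifying that the maximum of (Brill--Noether dimension on $\text{Pic}^{\deg\mathcal{E}}(Y_T)$) $+$ ($h^0(Y_T,\mathcal{E})$) is attained at the balance between the Clifford regime $\deg\mathcal{E} \leq 2g(Y_T) - 2$ and the Riemann--Roch regime $\deg\mathcal{E} > 2g(Y_T) - 2$. The role of the auxiliary hypothesis $n \leq g - 2$ is precisely to rule out the strata in which $g(Y_T)$ drops below $2$ and the Clifford bound on $Y_T$ degrades, which is exactly the mechanism that forces the stronger degree condition $d > 10(g+c)/3$ in the general case.
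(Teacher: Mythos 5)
Your setup --- the same parameter space $\mathcal{D}_{b,s}$ with its two projections, Clifford-type bounds obtained by pushing forward from (partial) normalizations, and the Brill--Noether estimate on $\mathrm{Pic}$ of the partial normalization to bound $\dim \eta(\mathcal{D}_{b,s})$ --- is exactly the paper's architecture. But the quantitative heart of the theorem is left as a plan, and the form in which you propose to carry it out would not deliver the stated thresholds. The multi-node correction to the dimension count is \emph{additive} in the number of nodes, not proportional to $s$: the paper proves $h^0(X,E) \leq \max\{s/2+1,\, s+r-g+1\}$ and $\dim \eta(\mathcal{D}_{b,s}) \leq \min(s+r,g)$, where $r \leq n$ is the number of nodes at which $L$ (hence $E=\mathcal{H}om_{\mathcal{O}_X}(F,L)$) fails to be locally free, giving $\dim \mathcal{D}_{b,s} \leq \tfrac{3}{2}s + n + b(d-s-g+1-b) + c(s+b-c)$. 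Since $s$ can be as small as $2$ (cf.\ \eqref{E2}) while $r$ can be as large as $n \leq g$, no universal constant $\kappa$ yields a bound of the form $\kappa s + b(\cdots) + c(\cdots)$; consequently your reduction to $d/(g+c) > (g+b-1)/(g+b-\kappa)$ cannot produce $d > 10(g+c)/3$ (to get $10/3$ at $b=0$, $g=3$ you would need $\kappa = 12/5$, i.e.\ the excess over $\tfrac{3}{2}s$ bounded by $\tfrac{9}{10}s$, which fails already for $s=2$ and $r$ large). The correct final inequality retains $n$ explicitly, namely $\dfrac{b+g+n-1}{b+g-\frac{3}{2}} < \dfrac{d}{g+c}$, and both thresholds come from the estimate $\dfrac{b+g+n-1}{b+g-\frac{3}{2}} \leq 2 + \dfrac{n+2-g}{b+g-\frac{3}{2}}$, which is at most $2 + \tfrac{4}{3} = \tfrac{10}{3}$ because $n \leq g$ and $b+g \geq 2$, and at most $2$ when $n \leq g-2$.

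Relatedly, your explanation of the refined statement is not the mechanism that actually operates: the hypothesis $n \leq g-2$ has nothing to do with the partial normalization having genus at least $2$ or with Clifford ``degrading'' at low genus; it enters only through the sign of $n+2-g$ in the numerical estimate above. Note also that the paper does not bound $h^0(X,E)$ by applying Clifford directly on the partial normalization carrying $E$: it passes to a subsheaf $\mathcal{F} \subseteq E$ of local type one at every node, pushed forward from the full normalization $Y$ of genus $g-n$, and adds back at most $n-(r+t)$ for the quotient skyscrapers; the partial normalizations are used only for the Brill--Noether dimension of $\eta(\mathcal{D}_{b,s})$. If you carry out your intended case analysis you will arrive at the additive-in-$n$ bound above rather than a $\kappa s$ bound, and you must then redo the final slope computation keeping the term $n/(c-b)$, as the paper does in \eqref{E16}--\eqref{E20}.
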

\begin{proof}
Let $N$ be a stable sub-bundle of $M_{V, L}^*$ of maximal slope By Lemma \ref{lem 2-2} and Lemma \ref{lem 2-3}, there is a commutative diagram
\begin{center}
 \begin{tikzpicture}
  \matrix(m)[matrix of math nodes,row sep=2.0em, column sep=3.5em,text height=1.5ex, 
 text depth=0.25ex]
 {  0 &  M_{V, L}^* & V \otimes \mathcal{O}_X & L & 0\\
 0 & N & W \otimes \mathcal{O}_X & F & 0 \\};
 \path[->](m-1-1) edge (m-1-2);
 \path[->](m-1-2) edge  (m-1-3);
 \path[->](m-1-3) edge  (m-1-4);
 \path[->](m-1-4) edge  (m-1-5);
 \path[->](m-2-1) edge (m-2-2);
 \path[->](m-2-2) edge  (m-2-3);
 \path[->](m-2-3) edge  (m-2-4);
 \path[->](m-2-4) edge  (m-2-5);
 \path[right hook ->](m-2-2) edge  (m-1-2);
 \path[right hook ->](m-2-3) edge  (m-1-3);
 \path[->](m-2-4) edge node[right]{$\sigma$}  (m-1-4);
 \end{tikzpicture}
 \end{center}
where $F$ is a rank one torsion-free sheaf and $\sigma$ is an inclusion.  Since $\mu(N) \geq \mu(M_{V, L}^*) > -2$, by \cite[Proposition 3.11]{Bhosle-2}, $h^1(X, F) = 0$. Let $x_1, \cdots, x_n \in X$ be the nodes of $X$ and assume $L$ has local type one at $x_1, \cdots, x_r, r \leq n$ and local type zero at the rest of the nodes. Set $E := \mathcal{H}om_{\mathcal{O}_X}(F, L)$. Then $E$ is a torsion-free sheaf of rank one. By Lemma \ref{lem 3-1}, $E$ has local type one at $x_1, \cdots, x_r$. After renumbering, if necessary, we may assume that $E$ has local type one at $x_{r+1}, \cdots, x_{r + t}$ and has local type zero at the rest of the nodes. By \cite[Lemma 2.5]{Bhosle-1}, $s \leq \deg E \leq s + r$ where $s := \deg L - \deg F$. If $F$ has local type zero at any of the points $x_1, \cdots, x_r$, then $\deg E < s + r$. There is a short exact sequence
$$
0 \longrightarrow \mathcal{F} \longrightarrow E \longrightarrow \oplus_{j=r + t + 1}^n \mathbb{C}(x_j) \longrightarrow 0
$$
where $\mathbb{C}(x_j) = \mathbb{C}$, for every $j = r + t + 1, \cdots, n$, $\mathcal{F}$ is a torsion-free sheaf of rank one and $\deg \mathcal{F} = \deg E - n + (r+t)$. Moreover, $\mathcal{F}$ has local type one at every node $x_1, \cdots, x_n$. By \cite[Chapter 8, Proposition 10]{Seshadri}, there is a line bundle $V$ on $Y$ such that $\pi_*\mathcal{V} = \mathcal{F}$ and $\deg \mathcal{V} = \deg \mathcal{F} - n = \deg E - 2n + (r+t)$. By Clifford's theorem, if $0 \leq \deg \mathcal{V} \leq 2g_Y - 2$, then
$$
h^0(Y, \mathcal{V}) \leq \dfrac{\deg \mathcal{V}}{2} + 1 = \dfrac{\deg E - (r+t)}{2} + 1
$$
and if $\deg \mathcal{V} \geq 2g_Y - 1$, then $h^0(Y, \mathcal{V}) = \deg \mathcal{V} + 1 - g_Y = \deg E - g + 1 - n + (r+t)$. Since $h^0(X, \mathcal{F}) = h^0(X, \mathcal{V})$ and $\deg E - (r+t) \leq s$, we have
\begin{align}\label{E14}
h^0(X, E) \leq h^0(Y, \mathcal{V}) + n - (r+t) \leq \max \{ s/2 + 1, \, s + r - g + 1 \}.
\end{align}

Consider the parameter space $\mathcal{D}_{b, s}$ as in Theorem \ref{thm 3-1} and the projection morphism $\eta: \mathcal{D}_{b, s} \longrightarrow \bar{J}^{d-s}(X)$. The image of $\eta$ consists of torsion-free sheaves of rank one and degree $d - s$ such that $h^0(X, \mathcal{H}om_{\mathcal{O}_X}(F, L)) \neq 0$. Suppose $E := \mathcal{H}om_{\mathcal{O}_X}(F, L)$ has local type one at $m \leq n$ points and local type zero at the rest of the points. Let $\pi': Y' \longrightarrow X$ be the blow-up of $X$ at exactly the above $m$ points. Then there is a line bundle $\mathcal{M}$ on $Y'$ of degree $\deg \mathcal{M} = \deg E - m$ such that $\pi_*'\mathcal{M} = E$. Since $h^0(X, E) \neq 0$ if and only if $h^0(Y', \mathcal{M}) \neq 0$ and $\dim \{ \mathcal{N} \in \text{Pic}^{\deg \mathcal{M}}(Y') \, : \, h^0(Y', \mathcal{N}) \neq 0 \} = \min ( \deg \mathcal{M}, g - m) = \min ( \deg E - m, g -m )$, $\dim \eta(\mathcal{D}_{b, s})$ is bounded above by $\max_{0 \leq m \leq n} \min (\deg E - m, g -m ) = \min (\deg E, g) \leq \min (s + r, g)$.

\vspace{0.1 cm}

Now we proceed as in Theorem \ref{thm 3-1}:
\begin{align}\label{E15}
\begin{split}
\dim D_{b,s} &\leq \min \{s+r, g \}   + \max \{ s/2, s + r -g \}\\
&+  b(d-s-g+1-b) + c(s+b-c) \\
&\leq 3\frac{s}{2} + r + b(d-s-g+1-b) + c(s+b -c) \\
&\leq 3\frac{s}{2} + n + b(d-s-g+1-b) + c(s+b -c).
\end{split}
\end{align}
and we want to show
\begin{align}\label{E16}
\begin{split}
3\frac{s}{2} + n + &b(d-s-g+1-b) + c(s+b -c) < c(d+1 -g) - c^2 \\
 &\Longleftrightarrow 3\frac{s}{2} +n < (c-b)(d+1-g -s-b) \\
 &\Longleftrightarrow \frac{3s}{2(c-b)} + \frac{n}{c-b} + s +b < d +1 -g
\end{split}
\end{align}
By Equation \eqref{E2}, we have $c - b \geq 1$ so that
\begin{equation}\label{E17}
\dfrac{3s}{2(c-b)} + \dfrac{n}{c-b} + s +b \leq \dfrac{\frac{3}{2}+ (c-b)}{g+c} d +   n  + b
\end{equation}
So it is sufficient to prove
\begin{align}\label{E18}
\begin{split}
\dfrac{\frac{3}{2}+ (c-b)}{g+c}d +   n  + b  < d+1-g  \Longleftrightarrow \dfrac{b+g+n -1 }{b+g-\frac{3}{2}} < \dfrac{d}{g+c}.
\end{split}
\end{align}
Since $b + g \geq 2$, we have
\begin{align*}
b + 2g - 3 \leq 2(b + g) - 3 &\Longrightarrow \dfrac{b + 2g - 3}{b+g-\frac{3}{2}} \leq 2
\end{align*}
and hence
\begin{align}\label{E19}
\dfrac{b+g+n -1 }{b+g-\frac{3}{2}} \leq \dfrac{(b+2g - 3) + (n + 2 -g)}{b+g-\frac{3}{2}} \leq 2 + \dfrac{n + 2 -g}{b+g-\frac{3}{2}} .
\end{align}
Since $n \leq g$, we have $n - g + 2 \leq 2$. Moreover, $b + g - \frac{3}{2} \geq \frac{3}{2}$. Thus from equation \eqref{E19}, we get
\begin{align}\label{E20}
\dfrac{b+g+n -1 }{b+g-\frac{3}{2}} \leq 2 + \dfrac{4}{3} = \dfrac{10}{3} \, .
\end{align}
Since $d > 10(g+c)/3$, equation \eqref{E18} holds. If $n \leq g - 2$, then $n + 2 - g \leq 0$. So from equation \eqref{E20}, we get
\begin{align*}
\dfrac{b+g+n -1 }{b+g-\frac{3}{2}} \leq 2.
\end{align*}
In this case, for $d > 2g + 2c$, then equation \eqref{E18} holds. Thus, in either cases, we get
$$
\deg \mathcal{D}_{b,s} < \dim \text{Gr}(c, H^0(X, L))
$$
and hence for a generic choice of $V \in \text{Gr}(c, H^0(X, L))$, $M_{V, L}^*$ is stable.
\end{proof}

\begin{Remark}
Let $X$ be an integral projective curve of arithmetic genus $g \geq 2$ with $n$ nodes and let $L$ be a globally generated torsion free sheaf of rank one and degree $d > 2g$. Let $V \subseteq H^0(X, L)$ be a subspace of codimension $c, 1 \leq c\leq g$. Then $h^1(X, L) = 0$ and by Riemann-Roch, we have
$$
\dim V = h^0(X, L) - c = d + 1 - g - c \geq d + 1 - 2g.
$$
By Proposition \cite[Proposition 2.3]{Bhosle-2}, $L$ is generated by two generic sections. In each of the cases of Theorem \ref{thm 3-1}, Theorem \ref{thm 3-2} and Theorem \ref{thm 4-1}, the condition on degree $d$ implies $\dim V \geq 2$. Thus, a generic subspace $V \subseteq H^0(X, L)$ of codimension $c$ generates the torsion free sheaf $L$.
\end{Remark}

\subsection*{Acknowledgements} We thank the reviewer for useful comments and suggestions to make the exposition better.

\end{document}